\documentclass [12pt]{article}
\usepackage{amssymb}
\usepackage{amsmath}
\usepackage{epsfig}
\usepackage{graphicx}
\usepackage{amsthm}
\usepackage{amssymb}
\usepackage{amsfonts}
\usepackage{dirtytalk}
\allowdisplaybreaks
\usepackage{enumerate}
\usepackage{enumerate}
\newtheorem{theorem}{Theorem}
[section]

\newtheorem{lemma}[theorem]{Lemma}
\usepackage{algorithm}
\usepackage{algpseudocode}
\usepackage{listings}
\usepackage{xcolor}
\begin{document}
\thispagestyle{empty}
\null\vspace{-1cm}
\medskip
\vspace{1.75cm}
\centerline{\textbf{{Bounds for the Zeros of Quaternionic Polynomials via Matrix Methods}}}
~~~~~~~~~~~~~~~~~~~~~~~~~~~~~~~~~~~~~~~~~~~~~~~~~~~~~~~~~~~~~~~~~~~~~~~~~~~~~~~~~~~~~~~~~~~~~~~~~~~~~~~~~~~~~~~~~~~~~~~~~~~~~~~~~~~~~~~~~~~~~~~~~~~~~~~~~~~

\centerline{\bf  {Ovaisa Jan,}  {Idrees Qasim} and {Nusrat Ahmed Dar}}
\centerline {Department of Mathematics, National Institute of Technology, Srinagar, India-190006}
\centerline {\small{ ovaisa\_2022phamth009@nitsri.ac.in,  idreesf3@nitsri.ac.in, nusrat\_2022phamth007@nitsri.ac.in}}
~~~~~~~~~~~~~~~~~~~~~~~~~~~~~~~~~~~~~~~~~~~~~~~~~~~~~~~~~~~~~~~~~~~~~~~~~~~~~~~~~~~~~~~~~~~~~~~~~~~~~~~~~~~~~~~~~~~~~~~~~~~~~~~~~~~~~~~~~~~~~~~~~~~~~~~~~
\vskip0.1in
\noindent \textbf{Abstract:} In this paper, we derive new bounds for the zeros of quaternionic polynomials by applying localization theorems, which includes Gershgorin-type theorems for the left eigenvalues of matrices of left monic quaternionic polynomials. These results yield sharper estimates compared to existing bounds, including improvements upon Cauchy, Fujiwara and Opfer's classical bounds. Second, we develop a matrix norm approach utilizing block matrix techniques and spectral norm estimates for a specially constructed auxiliary polynomial. This method provides additional upper bounds for polynomial zeros through careful analysis of the companion matrix's spectral radius. The comparison between the new bounds and some existing bounds have been illustrated with several examples. At the end of the paper we have given an algorithm. We have also given a Python code that predicts, for a given input which theorem will yield the sharpest upper bound. The combination of these approaches enhances the theoretical toolkit for analyzing quaternionic polynomials and offers potential applications in numerical methods, signal processing, and quaternionic quantum mechanics where zero location problems naturally arise.

\section{Introduction:} 
\noindent  Unlike complex numbers, quaternions are non-commutative, a property that introduces both richness and complexity into their algebraic structure. In recent years, the study of polynomials over quaternions has attracted considerable attention due to their non-commutative nature. A quaternionic polynomial can be expressed in two distinct forms left and right depending on whether the variable appears to the left or right of the coefficients.\\  
$$f_l(z) = q_n z^n + q_{n-1} z^{n-1} + \cdots + q_0$$  
and
$$f_r(z) = z^n q_n + z^{n-1} q_{n-1} + \cdots + q_0,$$   
where $q_{i}, z \in\mathbb{H},~~ (0 \le i \le n)$
\noindent This distinction leads to fundamentally different eigenvalue problems and complicates the task of locating zeros. The problem of locating the zeros of polynomials has attracted the attention
of many mathematicians, addition to the classical analysis methods, matrix analysis techniques have been used to obtain upper and lower bounds for the zeros of polynomials. In the same context, the Frobenius companion matrix plays an important link
between matrix analysis and the geometry of polynomials. It has been used for
locating zeros of polynomials by matrix methods (see e.g., \cite{ASS2,DR}). Classical results from complex analysis, such as the Gershgorin circle theorem and Ostrowski’s eigenvalue inclusion sets, have been extended to the quaternionic setting to provide bounds for the eigenvalues of quaternionic matrices (see e.g., \cite{ASS}).

These localization theorems are not only of theoretical interest but also serve as crucial computational tools in numerical analysis and applied sciences. Researchers now prioritize developing specialized localization theorems  for quaternionic matrices  due to their diverse applications in science and engineering, for instance, see e.g., \cite{BA,HL,WRL,ZF1,ZF2,ZJ} and references therein. 
These matrices have been extensively studied, with particular focus on the existence and location of their left and right eigenvalues (see \cite{BA,BJL,WRM}). The location for the left and right eigenvalues for quaternionic matrices 
has been given in  \cite{ASL,BA,CJH,GAB,HTL,PR,RL} and bounds for eigenvalues of quaternionic matrix polynomials are derived in  \cite{ASS}. Localization theorems, which bound eigenvalue locations are essential computational tools. The Gershgorin type theorems for left eigenvalues of a matrix  were proposed by Zhang \cite{ZF1}  using deleted absolute row sums. Some
recent developments on the location and computation of zeros of quaternionic polynomials can be found
in \cite{GM,JO1,JO2,LDL,IN,O,PS,SR}. Several classical bounds exist for zeros of quaternionic polynomials. One of the earliest and most well-known is due to Opfer as follows:\\
\textbf{Theorem A (\cite{O}, Theorem 4.2).} Let \( p(x) \) be a simple monic polynomial over \( \mathbb{H} \) of degree \( n \) with \( q_0 \neq 0 \). Then all zeros of \( p(x) \) are contained in the ball
\[
S := \{ z \in \mathbb{H} : |z| \leq R \}, \quad \text{where } R := \max \left\{ 1, \sum_{j=0}^{m-1} |q_j| \right\}.
\]
Recently, Ahmad and Ali \cite{ASS2} has advanced the framework for bounding zeros of quaternionic polynomials through localization theorems for quaternionic matrices. Specifically, they generalized Ostrowski and Brauer-type eigenvalue inclusion sets to the quaternion division algebra \(\mathbb{H}\), establishing bounds for left and right eigenvalues of quaternionic matrices. These were applied to derive polynomial zero bounds via companion matrices, improving upon classical results like Opfer's bound \cite{O}.

\indent This paper contributes to this growing body of literature by deriving new bounds for the zeros of quaternionic polynomials. By employing companion matrices and applying  Gershgorin type theorems to scaled matrices, we establish  inclusion regions for the zeros of left monic quaternionic polynomials. As a consequence, we
provide sharper bounds compared to the bound introduced by Cauchy, Fujiwara and Opfer for the zeros of quaternionic
polynomials. In addition, For a right polynomial, we construct an auxiliary polynomial whose companion matrix has a block structure and by using spectral norm estimates, we derive new upper bound of the zeros of the quaternionic polynomial.

\section{Notation and Preliminary Knowledge:}  
Let $\mathbb{R}$ and $\mathbb{C}$ denotes the fields of  real and complex numbers  respectively. The set of quaternions is denoted by $\mathbb{H}$ and is defined as $$\mathbb{H}:=\{{q=a+bi+cj+dk:a, b, c, d \in\mathbb{R}} \},$$ 

\[
i^2 = j^2 = k^2 = -1,
\]
\[
ij = -ji = k, \quad jk = -kj = i, \quad ki = -ik = j.
\]\\
For $q \in\mathbb{H}$,~ ${\bar{q}}=a-bi-cj-dk$ is the conjugate of $q$ and hence the modulus of a quaternion $q$ is given by $|q|=\sqrt{a^{2}+b^{2}+c^{2}+d^{2}}.$ The set of \(n\)-column vectors with entries in \(\mathbb{H}\) is denoted by \(\mathbb{H}^n\). 
For \(x,y\in\mathbb{H}^n\), the inner product is \(\langle x,y\rangle := y^H x\), (where \(y^H\) is the conjugate transpose) and the norm is \(\|x\| := \sqrt{\langle x,x\rangle}\).  
The space of \(m\times n\) quaternion matrices is denoted \(M_{m\times n}(\mathbb{H})\), for square matrices we write \(M_n(\mathbb{H})\).  
For \(B=(b_{ij})\in M_n(\mathbb{H})\), the transpose is \(B^T=(b_{ji})\) and the conjugate transpose is \(B^H=(\overline{b_{ji}})\). Next, we define norms on a matrix $B$ as:\\
1. 1-Norm: $\|B\|_{1}:=\max\limits_{1\le j \le n}\sum\limits_{i=1}^{n}|b_{ij}|=\|B^{H}\|_{\infty}.$\\
2. $\infty$-Norm: $\|B\|_{\infty}:=\max\limits_{1\le i \le n}\sum\limits_{j=1}^{n}|b_{ij}|=\|B^{H}\|_{1}.$\\
3. 2-Norm: $\|B\|_{2}:=\sup\limits_{x\neq 0}\left\{ \dfrac{\|Bx\|_{2}}{\|x\|_{2}}:~x\in\mathbb{H}^{n}                         \right\}=\|B^{H}\|_{2}.$\\
4. Frobenius Norm: $\|B\|_{F}:={(~\mbox{trace}~   B^{H}B)}^{1/2}.$\\ 

\noindent Since quaternion multiplication is non commutative there exists two types of eigenvalues, left eigenvalues and right eigenvalues.\\

\noindent\textbf{Definition 2.1:} Let $B:=(b_{ij})\in M_{n}{(\mathbb{H})}$. Then the set of left eigenvalues of $B$ is defined as:\\
$$\Lambda_{l}(B):=\{ \lambda \in\mathbb{H} : Bx=\lambda x ~\mbox{for some non-zero} ~x\in\mathbb{H}^{n} \}.$$
and the right eigenvalues of $B$ is defined as:
$$\Lambda_{r}(B):=\{ \lambda \in\mathbb{H} : Bx=x \lambda  ~\mbox{for some non-zero}
~x\in\mathbb{H}^{n} \}.$$
It is known from (\cite{R1}, Corollary 3.2) that a quaternionic matrix $B$ and its conjugate transpose $B^{H}$ have the same right eigenvalues. However, $B$ and $B^{H}$  may not have the same left eigenvalues, demonstrating that the spectral properties of a matrix and its conjugate transpose can differ fundamentally in the quaternionic case, unlike in the complex setting.\\

\noindent\textbf{Definition 2.2:}   
Let $B:=(b_{ij})\in M_{n}{(\mathbb{H})}$. For each  $i \in \{1,2, \dots, n\}$
the deleted row sum \( R_i \) is defined as \( R_i = \sum\limits_{j \neq i} |b_{ij}|,\) and the absolute row sum of $B$ is defined as \( R_i^{'} =R_i+|b_{ii}| \).  
The deleted column sum \( C_i \) for $B$ is defined as \( C_i = \sum\limits_{j \neq i} |b_{ji}| \) and the absolute column sum  is defined as \( C_i^{'} =C_i+|b_{ii}| \). \\

\noindent\textbf{Quaternion Polynomial:}\\
A quaternion polynomial is a function \(f: \mathbb{H} \to \mathbb{H}\). Due to the non-commutativity of quaternion multiplication, such polynomials are classified into three distinct types: left, right and general.\\  
$$f_l(z) = q_n z^n + q_{n-1} z^{n-1} + \cdots + q_0,~ ~\mbox{where} ~~ q_{i}, z \in\mathbb{H},~~ (0 \le i \le n)$$ is a left quaternion polynomial of degree $n$, whereas
$$f_r(z) = z^n q_n + z^{n-1} q_{n-1} + \cdots + q_0 , 
~~\mbox{where}~~ q_{i}, z \in\mathbb{H},~~ (0 \le i \le n).$$ is right quaternion polynomial of degree $n.$ If the leading coefficient $q_{n}=1$, then the above polynomials are said to be monic. General polynomials are defined as \(f_G(z) = q_0 z q_1 z \cdots z q_n + \phi(z)\), where \(\phi(z)\) is a finite sum of lower-degree monomials of the form \(r_0 z^i r_1 \cdots z r_i\) (\(i < n\)). Unlike commutative polynomials, evaluation is side-sensitive, left-evaluation $f_l(z) = q_n z^n + q_{n-1} z^{n-1} + \cdots + q_0$ differs from right-evaluation $f_r(z) = z^n q_n + z^{n-1} q_{n-1} + \cdots + q_0 $ and factorization is inherently non-unique.\\
\indent Let  $f_{r}(z)=\sum_{k=0}^{n}z^iq_i$ be a quaternionic polynomial. Two quaternionic polynomials of this type can be multiplied according to the convolution product (Cauchy multiplication rule): given $q_{r_{1}}(z)=\sum_{i=0}^{n}z^iq_i$ and $q_{r_{2}}(z)=\sum_{j=0}^{n}z^jt_j$ , we define
$$(q_{r_{1}}*q_{r_{2}})(z):=\sum_{i=0,1,\dots, n~~j=0,1,\dots,m}^{}z^{i+j}q_it_j.$$
If $q_{r_{1}}$ has real coefficients, then so called * multiplication coincides with the usual pointwise multiplication.
Please note that the * product in the quaternionic setting is associative but not, in general, commutative. This lack of commutativity results in a behavior of polynomials that is quite distinct from their behavior in the real or complex settings. For instance, a real or complex polynomial of degree $n$ can have at most $n$ (real or complex) zeros, counted with their multiplicity. However, in the quaternionic setting, the second-degree polynomial $q^2+1$ exhibits an infinite number of zeros.\\ 
The following result, found in \cite{GS}, presents a complete description of the zero sets of a regular product of two polynomials by relating them to the zero sets of the two individual factors.\\
\textbf{Theorem B.}{\label{theorem B}} Let $f$ and $g$ be given quaternionic polynomials. Then the convolution product $(f*g)(z_0)=0$ if and only if $f(z_0)=0$ or $f(z_0)\neq 0$ implies $g(f(z_0)^{-1}z_0f(z_0))=0$.\\

\noindent\textbf{Quaternion Companion Matrix:} \\ The  companion matrix for the left monic quaternionic polynomial \(f_{l}(z)\)and \(f_{r}(z)\) are given by

\[
C_{f_l} = 
\begin{pmatrix}
	0 & 1 & 0 & \cdots & 0 \\
	0 & 0 & 1 & \cdots & 0 \\
	\vdots & \vdots & \vdots & \ddots & \vdots \\
	0 & 0 & 0 & \cdots & 1 \\
	-q_0 & -q_1 & -q_2 & \cdots & -q_{n-1}
\end{pmatrix}
\]
and 
\[
C_{f_r} = 
\begin{pmatrix}
	0 & 0 &0& \cdots & 0 & -q_{0} \\
	1 & 0&0 & \cdots & 0 & -q_{1} \\
	0 & 1&0 & \cdots & 0 & -q_{2} \\
	\vdots &\vdots& \vdots & \ddots & \vdots & \vdots \\
	0 & 0 &0& \cdots & 1 & -q_{n-1}
\end{pmatrix}
\] respectively.

Assuming \(q_{0} \neq 0\), the simple monic reversal polynomials of \(f_{l}(z)\) and \(f_{r}(z)\) are defined as:

\[
\begin{aligned}
	g_{l}(z) &:= \frac{1}{q_{0}} \times f_{l}\left(\frac{1}{z}\right) \times z^{n} = z^{n} + q_{0}^{-1}q_{1}z^{n-1} + \cdots + q_{0}^{-1}q_{n-1}z + q_{0}^{-1}, \\
	g_{r}(z) &:= z^{n} \times f_{r}\left(\frac{1}{z}\right) \times \frac{1}{q_{0}} = z^{n} + z^{n-1}q_{1}q_{0}^{-1} + \cdots + zq_{n-1}q_{0}^{-1} + q_{0}^{-1}.
\end{aligned}
\]

Their corresponding companion matrices are denoted by \(C_{g_{l}}\) and \(C_{g_{r}}\), respectively. In the study of quaternion polynomials, the companion matrix plays an important role. \noindent The spectral properties of companion matrices provide a powerful bridge between quaternionic polynomials and their zeros. It is well-known from (Proposition 1, \cite{SR}) that if $\lambda$ is a left eigenvalue of $C_{f_{l}}$, then $\lambda$ is a zero of $f_{l}(z)$ and if $\lambda$ is a left eigenvalue of $C_{f_{l}}$, then by (Corollary 1, {\cite{SR}}) it is also a right eigenvalue. 
Similarly, for the right companion matrix $C_{f_r}$, the left eigenvalues coincide exactly with the zeros of $f_r(z)$ \cite{SR}. Consequently, all zeros of $f_l(z)$ are the right eigenvalues of $C_{f_l}$. However, the converse does not hold in general, not every right eigenvalue of $C_{f_l}$ corresponds to a polynomial zero, as illustrated by specific counterexamples in \cite{KI}.

This subtle distinction between left and right eigenvalues underscores the need for careful spectral analysis when bounding polynomial zeros via companion matrices.  
The zeros of \(g_{l}(z)\) and \(g_{r}(z)\) are the reciprocals of the zeros of \(f_{l}(z)\) and \(f_{r}(z)\), respectively. So if ${\Tilde{z}}=\frac{1}{z}$  is a zero of \(g_{l}(z)\), then $z$ is a zero of  \(f_{l}(z)\).
\\
\section{Auxiliary Results} For the proof of the theorems we need the following lemmas.

\begin{lemma}[\cite{ZF1}, Theorem 6]{\label{lemma 3.1}}
	Let $B:=(b_{ij})\in M_{n}{(\mathbb{H})}$. Then all the left eigenvalues of $B$ are located in the union of $n$ Gershgorin balls $\left\{z\in\mathbb{H}: |z-b_{ii}|\le  R_i(B)\right\}$ that is,
	$$\Lambda_{l}(B)\subseteq \bigcup\limits_{i=1}^{n} \left\{z\in\mathbb{H}: |z-b_{ii}|\le  R_i(B)\right\}.$$
\end{lemma}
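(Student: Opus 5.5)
The plan is to mimic the classical Gershgorin argument, taking care to keep the eigenvalue on the left throughout. Let $\lambda\in\Lambda_l(B)$, so there is a nonzero $x=(x_1,\dots,x_n)^T\in\mathbb{H}^n$ with $Bx=\lambda x$. Written componentwise, this says that for every $i$
\[
\sum_{j=1}^{n} b_{ij}x_j=\lambda x_i .
\]
The key point is that in the left-eigenvalue equation $\lambda$ multiplies $x_i$ from the left, exactly as $b_{ii}$ does. The two terms can therefore be combined without any commutation, giving
\[
(\lambda-b_{ii})x_i=\sum_{j\neq i} b_{ij}x_j .
\]
This is the step where the left (rather than right) eigenvalue is essential. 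For right eigenvalues one would get $x_i\lambda-b_{ii}x_i$, which cannot be factored.

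Next, choose the index $p$ with $|x_p|=\max_j |x_j|$. Since $x\neq 0$, we have $|x_p|>0$. Take moduli in the identity for $i=p$, using the multiplicativity $|ab|=|a||b|$ of the quaternion modulus and the triangle inequality:
\[
|\lambda-b_{pp}|\,|x_p|\le \sum_{j\neq p}|b_{pj}|\,|x_j|\le |x_p|\sum_{j\neq p}|b_{pj}| = |x_p|\,R_p(B).
\]
Dividing by $|x_p|>0$ gives $|\lambda-b_{pp}|\le R_p(B)$. Hence $\lambda$ lies in the $p$-th Gershgorin ball, and so in the union of all $n$ balls. Since $\lambda$ was an arbitrary left eigenvalue, the inclusion $\Lambda_l(B)\subseteq\bigcup_i\{z:|z-b_{ii}|\le R_i(B)\}$ follows.

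The main obstacle is only the noncommutativity, and it is resolved by the factorization $(\lambda-b_{ii})x_i$ noted above. The one remaining fact to state explicitly is that the quaternion modulus is multiplicative: $|q|^2=q\bar q$ and $\overline{ab}=\bar b\bar a$ give $|ab|^2=ab\bar b\bar a=|a|^2|b|^2$. With that in hand, every step is the same as in the complex case.
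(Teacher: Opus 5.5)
Your proof is correct and is the standard Gershgorin argument: take a maximal-modulus component $x_p$, use the factorization $(\lambda-b_{pp})x_p=\sum_{j\neq p}b_{pj}x_j$ (available because $\lambda$ acts on the left), and apply multiplicativity of the quaternion modulus, which is how Zhang proves the result in the source the paper cites. The paper gives no proof of its own for this lemma and simply quotes it from that reference, so there is nothing further to compare.
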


\begin{lemma}[\cite{ASS2}, Theorem 3.2]{\label{lemma 3.2}}
	Let $B:=(b_{ij})\in M_{n}{(\mathbb{H})}$. Then all the left eigenvalues of $B$ are located in the union of $n$ Gershgorin balls $\left\{z\in\mathbb{H}: |z-b_{ii}|\le  C_i(B)\right\}$ that is,
	$$\Lambda_{l}(B)\subseteq \bigcup\limits_{i=1}^{n} \left\{z\in\mathbb{H}: |z-b_{ii}|\le  C_i(B)\right\}.$$
\end{lemma}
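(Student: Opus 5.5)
The plan is to argue directly from the defining equation $Bx=\lambda x$ and not pass to $B^{H}$ or $B^{T}$. As noted after Definition 2.1, a quaternionic matrix and its conjugate transpose need not have the same left eigenvalues. So the usual complex-case trick of applying Lemma \ref{lemma 3.1} to the transpose is not available, and the column version has to be proved from scratch.

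The argument is by contradiction. Let $\lambda\in\Lambda_l(B)$ with eigenvector $x=(x_1,\dots,x_n)^T\neq 0$, and suppose $|\lambda-b_{ii}|>C_i(B)$ for every $i$. Reading off the $i$-th coordinate of $Bx=\lambda x$ and moving the diagonal term across gives
$$(\lambda-b_{ii})\,x_i=\sum_{j\neq i} b_{ij}x_j,\qquad i=1,\dots,n.$$
Here the scalars multiply $x_i$ from the left on both sides, which is exactly the left-eigenvalue setting. Taking moduli and using that the quaternion modulus is multiplicative, $|pq|=|p||q|$, together with the triangle inequality, I get
$$|\lambda-b_{ii}|\,|x_i|\le \sum_{j\neq i}|b_{ij}|\,|x_j|.$$

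The key step is to sum these $n$ inequalities over $i$ rather than picking the index where $|x_i|$ is maximal, as one would for the row version. Interchanging the order of summation on the right gives
$$\sum_{i}|\lambda-b_{ii}|\,|x_i|\le\sum_{j}|x_j|\sum_{i\neq j}|b_{ij}|=\sum_j C_j(B)\,|x_j|.$$
Rearranging yields
$$\sum_{j}\bigl(|\lambda-b_{jj}|-C_j(B)\bigr)|x_j|\le 0.$$
By assumption every coefficient is strictly positive and each $|x_j|\ge 0$, so every $x_j$ must be $0$. This contradicts $x\neq 0$. Hence $|\lambda-b_{ii}|\le C_i(B)$ for at least one $i$, which is the claimed inclusion.

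The only delicate point is making sure that non-commutativity causes no trouble. Every product in the argument appears only inside a modulus, and multiplicativity of $|\cdot|$ holds in $\mathbb{H}$ regardless of the order of the factors. Once the coordinate equation has been written with $\lambda-b_{ii}$ acting on the left of $x_i$, I do not expect any step to be a real obstacle.
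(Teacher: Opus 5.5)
Your argument is correct and complete. The paper gives no proof of this lemma; it is quoted from Ahmad and Ali \cite{ASS2}. Your summation argument is therefore a self-contained justification. The coordinate identity $(\lambda-b_{ii})x_i=\sum_{j\neq i}b_{ij}x_j$ needs only distributivity, and each non-commutative product is immediately absorbed by the multiplicative modulus. Summing over $i$, the $1$-norm counterpart of the maximal-$|x_i|$ argument for rows, produces exactly the deleted column sums $C_j(B)=\sum_{i\neq j}|b_{ij}|$. It even shows the inequality holds at an index $j$ with $x_j\neq 0$.

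One point in your motivation should be corrected: the reduction to Lemma~\ref{lemma 3.1} is not actually blocked. The plain transpose is indeed useless over $\mathbb{H}$, since it need not even preserve singularity. For example, $\left(\begin{smallmatrix}1&i\\ j&k\end{smallmatrix}\right)$ is invertible, but its transpose sends $(-j,1)^T$ to $0$. The conjugate transpose does work, however, once you conjugate the eigenvalue:
\begin{itemize}
\item $\lambda\in\Lambda_l(B)$ if and only if $\lambda I-B$ is singular.
\item $(\lambda I-B)^H=\bar\lambda I-B^H$.
\item $M$ is invertible if and only if $M^H$ is, because $(MN)^H=N^HM^H$ does hold over $\mathbb{H}$.
\end{itemize}
Together these give $\Lambda_l(B^H)=\{\bar\lambda:\lambda\in\Lambda_l(B)\}$. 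Applying Lemma~\ref{lemma 3.1} to $B^H$, with $R_i(B^H)=C_i(B)$ and $|\bar\lambda-\overline{b_{ii}}|=|\lambda-b_{ii}|$, then gives the column version immediately.

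So the remark after Definition 2.1 only says that the two left spectra differ by conjugation, and conjugation does not affect membership in the Gershgorin balls. That route is a one-liner, but it relies on the fact that a square quaternion matrix with trivial kernel is invertible. Your route needs nothing beyond the triangle inequality.
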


\begin{lemma}[\cite{ASS2}, Proposition 3.14]{\label{lemma 3.3}}
	Let $B \in M_{n}{(\mathbb{H})}$ and let $W$ be any invertible real matrix. Then $B$ and $WBW^{-1}$ have same left eigenvalues. 
\end{lemma}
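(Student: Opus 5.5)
The claim is that for $B\in M_n(\mathbb{H})$ and an invertible real matrix $W$, the matrices $B$ and $WBW^{-1}$ have the same left eigenvalues. The plan is to use only the definition of left eigenvalue and one fact: real scalars commute with every quaternion. Hence a real matrix acting on a quaternionic vector commutes with left multiplication by a quaternion scalar.

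\textbf{Step 1: the commutation identity.} Let $W=(w_{ij})$ be real and $x\in\mathbb{H}^n$, $\lambda\in\mathbb{H}$. The $i$-th entry of $W(\lambda x)$ is
\[
\sum_j w_{ij}\lambda x_j=\sum_j \lambda w_{ij}x_j ,
\]
because $w_{ij}\in\mathbb{R}$ is central in $\mathbb{H}$. Therefore
\[
W(\lambda x)=\lambda (Wx).
\]
The same holds for $W^{-1}$, since the inverse of a real matrix is real.

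\textbf{Step 2: left eigenvalues of $B$ are left eigenvalues of $WBW^{-1}$.} Suppose $Bx=\lambda x$ with $x\neq 0$. Put $y=Wx$; then $y\neq0$ because $W$ is invertible. We compute
\[
(WBW^{-1})y=WBx=W(\lambda x)=\lambda Wx=\lambda y .
\]
Hence $\lambda\in\Lambda_l(WBW^{-1})$.

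\textbf{Step 3: the reverse inclusion.} Apply Step 2 with $W^{-1}$ in place of $W$ and $WBW^{-1}$ in place of $B$. Since
\[
W^{-1}(WBW^{-1})W=B,
\]
every left eigenvalue of $WBW^{-1}$ is a left eigenvalue of $B$. The two inclusions give $\Lambda_l(B)=\Lambda_l(WBW^{-1})$.

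The only step that needs care is Step 1. For a general quaternionic $W$ the scalar $\lambda$ cannot be moved past the entries $w_{ij}$, and left spectra are indeed not similarity invariant in that setting. The realness hypothesis on $W$ is exactly what makes the argument work.
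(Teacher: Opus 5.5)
Your proof is correct. The realness of $W$ and $W^{-1}$ makes their entries central in $\mathbb{H}$, so $W(\lambda x)=\lambda(Wx)$; hence $Bx=\lambda x$ with $x\neq 0$ gives $(WBW^{-1})(Wx)=\lambda(Wx)$ with $Wx\neq 0$, and the reverse inclusion follows by applying this to $W^{-1}$. The paper gives no proof of its own here (it quotes the result from the literature), and your direct check from the definition of a left eigenvalue is the standard argument.
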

\begin{lemma}[\cite{DR}, Theorem 1.3 ]{\label{lemma 3.4}} Let  $f_l(z) = z^n + q_{n-1} z^{n-1} + \cdots + q_0$ be a monic 
	quaternionic polynomial with quaternionic coefficients  and $z$ be quaternionic variable,  then for any diagonal matrix $W= diag(w_{1},w_{2},\dots w_{n})$ where  $w_{1},w_{2},\dots w_{n}$ are positive real numbers, the left eigenvalues of $W^{-1}C_{f_{l}}W$  and the zeros of  $f_{l}(z)$  are same.
\end{lemma}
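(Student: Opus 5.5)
The plan is to reduce the statement to two facts: a real diagonal similarity does not change left eigenvalues, and the left eigenvalues of $C_{f_l}$ are exactly the zeros of $f_l$.

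\textbf{Step 1 (similarity).} Since $W=\mathrm{diag}(w_1,\dots,w_n)$ has positive real diagonal entries, it is an invertible real matrix, and so is $W^{-1}$. Lemma \ref{lemma 3.3}, applied with the real matrix $W^{-1}$ in place of $W$, shows that $C_{f_l}$ and $W^{-1}C_{f_l}W$ have the same left eigenvalues. One can also see this directly. Real matrices commute with left multiplication by a quaternion scalar, since $W(\lambda y)=\lambda (Wy)$. Hence $W^{-1}C_{f_l}Wy=\lambda y$ with $y\neq 0$ holds if and only if $C_{f_l}(Wy)=\lambda (Wy)$ with $Wy\neq 0$. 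So it suffices to prove $\Lambda_l(C_{f_l})=\{\lambda\in\mathbb{H}: f_l(\lambda)=0\}$.

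\textbf{Step 2 (zeros $\Rightarrow$ left eigenvalues).} Suppose $f_l(\lambda)=0$. Put $x=(1,\lambda,\lambda^2,\dots,\lambda^{n-1})^T\neq 0$.
\begin{itemize}
\item For $i\le n-1$, row $i$ of $C_{f_l}x$ equals $x_{i+1}=\lambda^{i}=\lambda\cdot\lambda^{i-1}=\lambda x_i$.
\item The last row gives $-\sum_{k=0}^{n-1}q_k\lambda^k$. Because $f_l(\lambda)=\lambda^n+\sum_{k=0}^{n-1} q_k\lambda^k=0$, this equals $\lambda^n=\lambda x_n$.
\end{itemize}
Hence $C_{f_l}x=\lambda x$, so $\lambda$ is a left eigenvalue.

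\textbf{Step 3 (left eigenvalues $\Rightarrow$ zeros).} This is essentially Proposition 1 of \cite{SR}, but I would check it directly. Suppose $C_{f_l}x=\lambda x$ with $x\neq 0$.
\begin{itemize}
\item The first $n-1$ rows give $x_{i+1}=\lambda x_i$. Inducting, $x_i=\lambda^{i-1}x_1$, where powers of $\lambda$ sit on the left.
\item Therefore $x_1\neq 0$, since otherwise every $x_i$ vanishes.
\item The last row reads $-\sum_{k=0}^{n-1} q_k\lambda^{k}x_1=\lambda\cdot\lambda^{n-1}x_1$. Rearranging gives $f_l(\lambda)\,x_1=0$.
\item Since $\mathbb{H}$ is a division ring and $x_1\neq 0$, we conclude $f_l(\lambda)=0$.
\end{itemize}

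\textbf{Main point of care.} The only delicate issue is the ordering of noncommuting factors. It is essential that the eigenvalue multiplies on the left and that $f_l$ has its coefficients on the left. Only then does the relation $x_{i+1}=\lambda x_i$ produce exactly the left-evaluated polynomial $\sum q_k\lambda^k$ acting on $x_1$ from the left, and this is what lets us cancel $x_1$ on the right. With this observed, Steps 1 to 3 combine to give the lemma.
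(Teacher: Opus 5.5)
Your proof is correct and complete. The paper gives no proof of this lemma; it imports it from \cite{DR}. Your decomposition is nonetheless what the paper relies on implicitly: Lemma~\ref{lemma 3.3} for the real diagonal similarity, plus the correspondence between left eigenvalues of $C_{f_l}$ and zeros of $f_l$. The only difference is that you check that correspondence directly instead of citing \cite{SR}.

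One point in your favour: your Step 2 shows that every zero $\lambda$ is a left eigenvalue, via the eigenvector $(1,\lambda,\dots,\lambda^{n-1})^T$. That is the inclusion actually needed in the proofs of Theorems~\ref{th:01} and~\ref{th:02}. The paper's preliminaries cite \cite{SR} only for the converse inclusion.
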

\noindent The next lemma can be found in \cite{INT}.
\begin{lemma}{\label{lemma 3.5}}
	Let \( A = (a_{ij}) \in M_n(\mathbb{H}) \) be partitioned as
	\[
	A = \begin{bmatrix}
		A_{11} & A_{12} \\
		A_{21} & A_{22}
	\end{bmatrix},
	\]
	where \( A_{ij} \in M_{n_i \times n_j}(\mathbb{H}) \) is the \((i,j)\) block of \( A \) such that \( n_1 + n_2 = n \), where \( i,j \in \{1,2\} \). If
	\[
	\tilde{A} = \begin{bmatrix}
		\|A_{11}\|_2 & \|A_{12}\|_2 \\
		\|A_{21}\|_2 & \|A_{22}\|_2
	\end{bmatrix},
	\]
	then the following inequalities hold:
	\begin{itemize}
		\item \( \rho_r(A) \leq \rho_r(\tilde{A}) \),
		\item \( \|A\|_2 \leq \|\tilde{A}\|_2 \).
	\end{itemize}
\end{lemma}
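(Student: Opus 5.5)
The plan is to prove the norm inequality directly from the definition of $\|\cdot\|_2$, and then to deduce the spectral radius inequality from it through a Gelfand-type formula applied to powers of $A$.

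\textbf{Norm inequality.} Identify $\mathbb{H}^n$ isometrically with $\mathbb{R}^{4n}$. Left multiplication by a quaternionic matrix is then a real-linear map, so the triangle inequality and $\|Mx\|\le\|M\|_2\|x\|$ hold for the blocks $A_{ij}$. Take $x\in\mathbb{H}^n$ with $\|x\|=1$ and partition it conformally as $x=(x_1,x_2)$. Then
\[
\|Ax\|^2=\sum_{i=1}^{2}\|A_{i1}x_1+A_{i2}x_2\|^2\le\sum_{i=1}^{2}\bigl(\|A_{i1}\|_2\|x_1\|+\|A_{i2}\|_2\|x_2\|\bigr)^2=\|\tilde A y\|^2,
\]
where $y=(\|x_1\|,\|x_2\|)^T\in\mathbb{R}^2$ satisfies $\|y\|=1$. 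Hence $\|Ax\|\le\|\tilde A\|_2$, and taking the supremum over $x$ gives $\|A\|_2\le\|\tilde A\|_2$.

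\textbf{Comparison for powers.} Next I will show by induction on $k$ that $\widetilde{A^k}\le \tilde A^{\,k}$ entrywise. The $(i,j)$ block of $A^{k+1}$ is $\sum_l (A^k)_{il}A_{lj}$. By the triangle inequality and submultiplicativity,
\[
\|(A^{k+1})_{ij}\|_2\le\sum_l\|(A^k)_{il}\|_2\,\|A_{lj}\|_2,
\]
and the induction hypothesis bounds the right-hand side by $(\tilde A^{\,k+1})_{ij}$. The $2$-norm is monotone on entrywise nonnegative real matrices, since $|Bv|\le |B'|\,|v|$ componentwise when $0\le B\le B'$. Combining this with the first part gives
\[
\|A^k\|_2\le\|\widetilde{A^k}\|_2\le\|\tilde A^{\,k}\|_2 .
\]

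\textbf{Spectral radius.} The remaining task, and the main obstacle, is to justify $\rho_r(A)=\lim_k\|A^k\|_2^{1/k}$ for the right spectral radius. I would use the complex adjoint $\chi_A\in M_{2n}(\mathbb{C})$, which has the following properties.
\begin{itemize}
\item It is multiplicative: $\chi_{A^k}=\chi_A^k$.
\item It is $2$-norm preserving, because it comes from a unitary identification $\mathbb{H}^n\cong\mathbb{C}^{2n}$.
\item Its eigenvalues are exactly the complex representatives $\lambda,\bar\lambda$ of the right eigenvalue classes of $A$, and these have the same moduli as the right eigenvalues.
\end{itemize}
Consequently $\rho_r(A)=\rho(\chi_A)=\lim_k\|\chi_A^k\|_2^{1/k}=\lim_k\|A^k\|_2^{1/k}$ by the complex Gelfand formula. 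Since $\tilde A$ is a real nonnegative matrix, $\rho_r(\tilde A)=\rho(\tilde A)=\lim_k\|\tilde A^{\,k}\|_2^{1/k}$. Taking $k$-th roots in the inequality from the previous step and letting $k\to\infty$ then yields $\rho_r(A)\le\rho_r(\tilde A)$.
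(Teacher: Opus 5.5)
The paper gives no proof of this lemma. It quotes it from the literature (the reference labelled INT) and uses it as a black box in Theorem~\ref{th:03}, so your proposal supplies an argument the paper omits rather than competing with one. Your argument is correct.

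It is the standard proof of the complex version of this lemma, with the only genuinely quaternionic input isolated in the complex adjoint. Its steps are:
\begin{itemize}
\item the conformal-partition bound $\|A\|_2\le\|\tilde A\|_2$;
\item the entrywise domination $\widetilde{A^k}\le\tilde A^{\,k}$, from block multiplication and submultiplicativity;
\item monotonicity of $\|\cdot\|_2$ on nonnegative matrices;
\item Gelfand's formula.
\end{itemize}

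When you write it up, make the quaternionic input explicit. Write $A=A_1+A_2j$ with $A_1,A_2\in M_n(\mathbb{C})$ and $\chi_A=\begin{pmatrix}A_1&A_2\\-\overline{A_2}&\overline{A_1}\end{pmatrix}$. The map $\phi(x_1+x_2j)=(x_1,-\overline{x_2})$ is a real-linear bijective isometry $\mathbb{H}^n\to\mathbb{C}^{2n}$. It satisfies $\phi(Ax)=\chi_A\phi(x)$, and $\phi(x\mu)=\phi(x)\mu$ for $\mu\in\mathbb{C}$.
\begin{itemize}
\item Iterating the first relation gives $\phi(A^kx)=\chi_A^k\phi(x)$, hence $\|A^k\|_2=\|\chi_A^k\|_2$.
\item The second relation gives $Ax=x\mu$ if and only if $\chi_A\phi(x)=\mu\phi(x)$. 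Right eigenvalues are closed under similarity, and every quaternion is similar to a complex number of the same modulus. Together these yield $\rho_r(A)=\rho(\chi_A)$.
\item Likewise $\chi_{\tilde A}=\tilde A\oplus\tilde A$ justifies $\rho_r(\tilde A)=\rho(\tilde A)$.
\end{itemize}

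A side benefit of your route is that nothing in it depends on the partition being $2\times 2$. The same proof gives both inequalities for any $m\times m$ block partition with $\tilde A=(\|A_{ij}\|_2)$.
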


\section{Main Results}
\begin{theorem}\label{th:01}
	Let $f_{l}(z)$ be a  monic polynomial over $\mathbb{H}$ of degree $n.$ Then the zeros of $f_{l}(z)$ satisfies the following inequality
	\begin{equation}
		\left|z+\dfrac{q_{n-1}}{2}\right|\le \left|\dfrac{q_{n-1}}{2}\right|+\sum\limits_{i=2}^{n}|q_{n-i}|^{\frac{1}{i}}.\label{eq:01}
	\end{equation}
\end{theorem}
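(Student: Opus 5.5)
The plan is to apply the row-type Gershgorin theorem (Lemma \ref{lemma 3.1}) to a diagonally scaled and shifted companion matrix $C_{f_l}$. Lemma \ref{lemma 3.4} guarantees that the scaling does not change the left eigenvalues, and these are exactly the zeros of $f_l$. Put
$$S:=\sum_{i=2}^{n}|q_{n-i}|^{1/i}.$$

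\emph{Degenerate case $S=0$.} Then $q_0=\cdots=q_{n-2}=0$, so $f_l(z)=(z+q_{n-1})z^{n-1}$. Its zeros are $0$ and $-q_{n-1}$, and both satisfy \eqref{eq:01} trivially. Hence assume $S>0$.

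\emph{Scaling.} Take $W=\mathrm{diag}(S,S^2,\dots,S^n)$, a positive real diagonal matrix. The matrix $B:=W^{-1}C_{f_l}W$ has entries
$$b_{ij}=\frac{w_j}{w_i}(C_{f_l})_{ij}.$$
So every superdiagonal entry equals $S$, and the last row is
$$\bigl(-q_0S^{1-n},\,-q_1S^{2-n},\,\dots,\,-q_{n-2}S^{-1},\,-q_{n-1}\bigr).$$
By Lemma \ref{lemma 3.4}, the left eigenvalues of $B$ are exactly the zeros of $f_l$.

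\emph{Shift.} Next set $B':=B+\frac{q_{n-1}}{2}I$. If $Bx=\lambda x$, then $B'x=\lambda x+\frac{q_{n-1}}{2}x=(\lambda+\frac{q_{n-1}}{2})x$, because $\frac{q_{n-1}}{2}$ multiplies $x$ from the left in both terms. So $\mu=z+\frac{q_{n-1}}{2}$ is a left eigenvalue of $B'$ for every zero $z$. The diagonal of $B'$ is $\frac{q_{n-1}}{2}$ in rows $1,\dots,n-1$ and $-\frac{q_{n-1}}{2}$ in row $n$.

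\emph{Deleted row sums.} For rows $1,\dots,n-1$ the deleted row sum is at most $S$; it equals $S$ for rows $1,\dots,n-1$ and there is no off-diagonal entry beyond the superdiagonal. For row $n$, write $j-1=n-i$, so that $j-n=1-i$. The deleted row sum is then
$$R_n=\sum_{i=2}^{n}|q_{n-i}|\,S^{1-i}.$$
Now put $a_i=|q_{n-i}|^{1/i}\le S$. Each term satisfies
$$|q_{n-i}|S^{1-i}=a_i\Bigl(\frac{a_i}{S}\Bigr)^{i-1}\le a_i .$$
Therefore $R_n\le\sum_i a_i=S$.

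\emph{Conclusion.} Lemma \ref{lemma 3.1} gives either $|\mu-\frac{q_{n-1}}{2}|\le S$ or $|\mu+\frac{q_{n-1}}{2}|\le S$. In both cases the triangle inequality yields $|\mu|\le|\frac{q_{n-1}}{2}|+S$, which is \eqref{eq:01}.

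The main obstacle is choosing the scaling so that all deleted row sums are simultaneously at most $S$. The geometric weights $w_j=S^j$ do this, and the only genuine estimate is the elementary inequality $|q_{n-i}|S^{1-i}\le|q_{n-i}|^{1/i}$. One also has to check that the shift by the quaternion $\frac{q_{n-1}}{2}$ is legitimate for left eigenvalues; it is, since the shift acts on the left exactly as $\lambda$ does.
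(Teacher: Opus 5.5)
Your proof is correct and follows essentially the paper's route: a geometric diagonal similarity of $C_{f_l}$, the row Gershgorin theorem (Lemma \ref{lemma 3.1}), and the estimate $|q_{n-i}|\,w^{1-i}\le|q_{n-i}|^{1/i}$ whenever $w\ge|q_{n-i}|^{1/i}$. The differences are cosmetic. The paper scales by $w=\max_{i}|q_{n-i}|^{1/i}$ rather than your $S$, and it applies the triangle inequality to the unshifted Gershgorin balls instead of shifting the matrix by $\frac{q_{n-1}}{2}I$. Your separate treatment of the case $S=0$ also fills a small gap the paper leaves, since its $w$ is then $0$ and the scaling matrix is undefined.
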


\begin{proof}
	The companion matrix of the polynomial $f_l(z)$ is given by 
	\[
	C_{f_l} = 
	\begin{pmatrix}
		0 & 1 & 0 & \cdots & 0 \\
		0 & 0 & 1 & \cdots & 0 \\
		\vdots & \vdots & \vdots & \ddots & \vdots \\
		0 & 0 & 0 & \cdots & 1 \\
		-q_0 & -q_1 & -q_2 & \cdots & -q_{n-1}
	\end{pmatrix}.
	\]
	Let $W=diag\left(\dfrac{1}{w^{n-1}},\dfrac{1}{w^{n-2}},\cdots,1\right),$ where $w^{i} \in \mathbb{R}^+,$ $i=0,1\dots,n-1$, then 
	\[
	W^{-1} C_{f_l} W = 
	\begin{pmatrix}
		0 & w & 0 & \cdots & 0 \\
		0 & 0 & w & \cdots & 0 \\
		\vdots & \vdots & \vdots & \ddots & \vdots \\
		0 & 0 & 0 & \cdots & w \\
		-\dfrac{q_0}{w^{n-1}}  & -\dfrac{q_1}{w^{n-2}}& -\dfrac{q_2}{w^{n-3}} & \cdots & -q_{n-1}
	\end{pmatrix}.
	\]
	By Lemma \ref{lemma 3.3},  $C_{f_l}$  and  $W^{-1} C_{f_l} W$ have the same left eigenvalues.\\
	On applying Lemma \ref{lemma 3.1} to $W^{-1} C_{f_l} W$, it follows that all the left eigenvalues of $W^{-1} C_{f_l} W$ lie in the union of the balls 
	$$ \left\{z\in\mathbb{H}:|z|\le w\right\}~\mbox{and} \left\{z\in\mathbb{H}:|z+q_{n-1}|\le \sum_{i=2}^{n}\left|\dfrac{q_{n-i}}{w^{i-1}}\right|\right\}.$$ 
	Choose $w=\max\limits_{2\le i\le n} |q_{n-i}|^\frac{1}{i}$, then $ |q_{n-i}|^\frac{1}{i}\le w$ and hence $|q_{n-1}|^\frac{i-1}{i}\le w^{i-1},~i=2,3,\dots,n$. \\
	Case 1: $|z|\le w,$ then
	\begin{eqnarray*}
		\left|z+\dfrac{q_{n-1}}{2}\right|&\le& |z|+\left|\dfrac{q_{n-1}}{2}\right|\\&\le& |w|+ \left|\dfrac{q_{n-1}}{2}\right|\\&\le&\sum\limits_{i=2}^{n}|q_{n-i}|^\frac{1}{i}+\left|\dfrac{q_{n-1}}{2}\right|.
	\end{eqnarray*}
	Case 2: $|z+q_{n-1}|\le \sum_{i=2}^{n}\left|\dfrac{q_{n-i}}{w^{i-1}}\right|$, then
	\begin{eqnarray*}
		\left|z+\dfrac{q_{n-1}}{2}\right|&\le& |z+q_{n-1}|+\left|\dfrac{q_{n-1}}{2}\right|\\ &\le&\sum_{i=2}^{n}\dfrac{|q_{n-i}|}{w^{i-1}}+\left|\dfrac{q_{n-1}}{2}\right|\\&\le& \sum\limits_{i=2}^{n}|q_{n-i}|^\frac{1}{i}+\left|\dfrac{q_{n-1}}{2}\right|.
	\end{eqnarray*}
	Hence all the left eigenvalues of $W^{-1} C_{f_l} W$ are contained in the union of the balls (\ref{eq:01}). Since W is a diagonal matrix with positive real enteries Lemma \ref{lemma 3.4} ensures that the left eigenvalues of $W^{-1} C_{f_l} W$ coincides with the zeros of $f_l(z)$. Consequently all the zeros of $f_l(z)$ satisfies the inequality (\ref{eq:01}). 
	
\end{proof}
\begin{theorem}\label{th:02}
	For a given positive number $w$, let $M= \max |q_i|w^i, ~i=1,2,\dots,n.$ Then all the zeros of  $f_{l}(z)$ satisfy $$\left|z \right| \geq \frac{\left|q_0\right|w}{\left|q_0\right| + M}.$$
\end{theorem}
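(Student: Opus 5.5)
We will argue directly from the triangle inequality and the multiplicativity of the quaternionic modulus, $|q_iz^i|=|q_i||z|^i$, rather than through a companion matrix; the matrix route is noted at the end as an alternative. Note that the maximum defining $M$ runs over $i=1,\dots,n$, so it includes the leading term $q_nw^n$. If $q_0=0$ the asserted bound is $|z|\ge 0$, which is trivial, so assume $q_0\neq 0$. We prove the contrapositive: if
$$|z|<\frac{|q_0|w}{|q_0|+M},$$
then $f_l(z)\neq 0$.

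Put $r=|z|/w$. The hypothesis gives $r<\frac{|q_0|}{|q_0|+M}<1$, which is the key point that allows summing a geometric series. By definition of $M$ we have $|q_i|\le M w^{-i}$ for $1\le i\le n$. Hence
$$|f_l(z)|\ \ge\ |q_0|-\sum_{i=1}^{n}|q_i||z|^i\ \ge\ |q_0|-M\sum_{i=1}^{n}r^i\ >\ |q_0|-M\frac{r}{1-r}.$$
The last step uses $0<r<1$ and bounds the finite sum by the infinite geometric series. If $z=0$ there is nothing to prove, since $f_l(0)=q_0\ne 0$.

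It remains to check that $|q_0|-\frac{Mr}{1-r}\ge 0$, which is equivalent to $Mr\le |q_0|(1-r)$, i.e. $r(|q_0|+M)\le |q_0|$. This is exactly the hypothesis on $r$. Therefore $|f_l(z)|>0$ and $z$ is not a zero. Consequently every zero satisfies $|z|\ge \frac{|q_0|w}{|q_0|+M}$.

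The only delicate step is the strictness and convergence in the geometric-series bound. This is handled by first observing that $r<1$ follows automatically from the hypothesis, because $M>0$ unless all $q_i$ with $i\ge1$ vanish; in that degenerate case $f_l=q_0$ has no zeros at all.

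If one prefers to stay in the paper's matrix framework, an alternative is the following. Take the reversal polynomial $g_l$, whose zeros are the reciprocals of those of $f_l$, and scale its companion matrix by a diagonal $W$ with positive real entries built from $w$, using Lemma \ref{lemma 3.3} and Lemma \ref{lemma 3.4}. Then apply the Gershgorin bound of Lemma \ref{lemma 3.1} to obtain $|1/z|\le (|q_0|+M)/(|q_0|w)$. The direct estimate above is shorter and avoids having to choose the weights, so it is the approach we will follow.
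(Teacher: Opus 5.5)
Your proof is correct, and it takes a genuinely different route from the paper.

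The paper passes to the reversal polynomial $g_l$ and conjugates its companion matrix by $W=\mathrm{diag}(1,w^{-1},\dots,w^{-(n-1)})$. It then applies the column Gershgorin theorem for left eigenvalues (Lemma~\ref{lemma 3.2}). The first column gives $|\tilde z|\le w^{n-1}/|q_0|$, and the remaining columns give $|\tilde z|\le w^{-1}+|q_i|w^{i-1}/|q_0|$ for $1\le i\le n-1$; all of these are dominated by $(|q_0|+M)/(|q_0|w)$. The paper then inverts, using that zeros of $f_l$ are reciprocals of zeros of $g_l$, and that these are exactly the left eigenvalues of the scaled companion matrix (Lemmas~\ref{lemma 3.3} and~\ref{lemma 3.4}).

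You instead run the classical Cauchy-type estimate $|f_l(z)|\ge |q_0|-M\sum_{i=1}^{n}r^i$, which needs only the triangle inequality and $|q_iz^i|=|q_i||z|^i$. Your route has several advantages:
\begin{itemize}
\item It is self-contained. It avoids two nontrivial quaternionic inputs: the identification of zeros with left eigenvalues of the companion matrix, and the Gershgorin theorem for left eigenvalues.
\item It does not need $f_l$ to be monic. The paper's formula for $g_l$ tacitly uses $q_n=1$, whereas your $M$ simply contains $|q_n|w^n$.
\item It applies verbatim to right polynomials $f_r$.
\item It gives a quantitative lower bound $|f_l(z)|\ge |q_0|-Mr/(1-r)$ on the excluded ball.
\end{itemize}
The paper's route buys uniformity with Theorems~\ref{th:01} and~\ref{th:03}, which use the same scaled-companion-matrix machinery. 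It also leaves open substituting sharper inclusion sets, such as Ostrowski or Brauer regions.

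One correction to your closing remark. With this scaling, the row version (Lemma~\ref{lemma 3.1}) collects all scaled coefficients into the single deleted sum of the last row. It therefore gives only the sum-type bound $|1/z|\le\max\{w^{-1},\sum_{i=1}^{n}|q_i|w^{i-1}/|q_0|\}$, which does not in general imply the stated estimate. To get the max-type bound you need the column version, Lemma~\ref{lemma 3.2}, which is what the paper uses.
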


\begin{proof} Let $q_{0}\ne 0$ and \begin{eqnarray*}
		g_{l}(z)&:= &\frac{1}{q_{0}} \times f_{l}\left(\frac{1}{z}\right) \times z^{n} \\ &=& z^{n} + q_{0}^{-1}q_{1}z^{n-1} + \cdots + q_{0}^{-1}q_{n-1}z + q_{0}^{-1}
	\end{eqnarray*} be simple monic reversal polynomial of $f_{l}(z).$
	The companion matrix of the polynomial $g_{l}(z)$ is given by \\
	
	\[
	C_{g_l} = 
	\begin{pmatrix}
		0 & 1 & 0 & \cdots & 0 \\
		0 & 0 & 1 & \cdots & 0 \\
		\vdots & \vdots & \vdots & \ddots & \vdots \\
		0 & 0 & 0 & \cdots & 1 \\
		\dfrac{-1}{q_{0}} & \dfrac{-q_{n-1}}{q_{0}} & \dfrac{-q_{n-2}}{q_{0}} & \cdots & \dfrac{-q_{1}}{q_{0}}
	\end{pmatrix}.
	\]
	Let $W=diag\left(1,\dfrac{1}{w},\dfrac{1}{w^{2}}\dots,\dfrac{1}{w^{n-1}}\right),$ where $w^{i} \in \mathbb{R}^+,$ $i=0,1\dots,n-1$, then
	\[
	W^{-1} C_{g_l} W = 
	\begin{pmatrix}
		0 & w^{-1} & 0 & \cdots & 0 \\
		0 & 0 & w^{-1} & \cdots & 0 \\
		\vdots & \vdots & \vdots & \ddots & \vdots \\
		0 & 0 & 0 & \cdots & w^{-1} \\
		-\dfrac{w^{n-1}}{q_{0}}  & -\dfrac{q_{n-1}w^{n-2}}{q_{0}}& -\dfrac{q_{n-2}w^{n-3}}{q_{0}} & \cdots & \dfrac{-q_{1}}{q_{0}}
	\end{pmatrix}.
	\]
	Since the eigenvalues of $C_{f_{l}}$ are the reciprocals of the eigenvalues of $C_{g_l}$ and the eigenvalues of $W^{-1} C_{g_l} W$  are same as those of $C_{g_l}$. For any zero $z$ of 
	$f_{l}(z),$ let $\Tilde{z}=\dfrac{1}{z}$ be the corresponding eigenvalue of $C_{g_l},$ then 
	by	 applying Lemma \ref{lemma 3.2}  to $W^{-1} C_{g_l} W$ we obtain       
	$$\left|\Tilde{z} \right| \leq \max_{1\le i\le n} \left\{ \frac{ \left|q_0\right| + \left|q_i\right|w^{i}}{\left|q_0\right|w}\right\} \leq  \frac{\left|q_0\right| + M}{\left|q_0\right|w}.$$\\
	This implies\\
	$$\left|{z} \right| \geq  \left(\frac{\left|q_0\right| + M}{\left|q_0\right|w}\right)^{-1}$$\\
	and the conclusion of the Theorem follows immediately.
\end{proof}

\noindent Let $f_{r_{1}}(z)=z^{n}+z^{n-1}q_{n}+\dots+zq_{2}+q_{1}$ be a right quaternionic polynomial. To derive the new bounds, we define the auxiliary polynomial:
\begin{eqnarray*}
	P_{r}(z)&=& f_{r_{1}}(z)*(q_{n} -z)\\&=&z^{n+1}-z^{n-1}[q^2_{n}-q_{n-1}]-\dots-q_{1}q_{n}\\&=& z^{n+1}-z^{n-1}v_{n}-z^{n-2}v_{n-1}-\dots- zv_{2}-v_{1},
\end{eqnarray*}
where $v_{j}=q_{j}q_{n}-q_{j-1}$ for $j=1,2,\dots,n,$ with $q_{0}=0$.
By Theorem B, $f_{r_{1}}(z)*(q_{n} -z)=0$ if and only if either $f_{r_{1}}(z)=0$ or  $f_{r_{1}}(z)\neq 0$ implies $f_{r_{1}}(z)^{-1}zf_{r_{1}}(z)- q_{n}=0$. Notice
that  $f_{r_{1}}(z)^{-1}zf_{r_{1}}(z)- q_{n}=0$ is equivalent to  $f_{r_{1}}(z)^{-1}zf_{r_{1}}(z)=q_{n}$ and if $f_{r_{1}}(z)\neq 0$ this implies that $z=q_{n}$. So the only zeros of $f_{r_{1}}(z)*(q_{n} -z)$ are $z=q_{n}$ and the zeros of $f_{r_{1}}(z)$.
The corresponding companion matrix $C_{P_{r}}$ of $P_{r}$ is given by
\[
C_{P_{r}} = 
\begin{pmatrix}
	0 & 0 & 0 & \cdots &0& v_{1} \\
	1 & 0 & 0 & \cdots &0& v_{2} \\
	\vdots & \vdots & \vdots & \ddots & \vdots&\vdots \\
	0 & 0 & 0 & \cdots &0 & v_{n} \\
	0 & 0 & 0 & \cdots &1& 0
\end{pmatrix}.
\] 

\begin{theorem}\label{th:03}
	If $z$ is any zero of $P_{r}$ and $n\ge 4,$ then
	\begin{multline*}
		|z|\le \biggl(\dfrac{1}{2}                                  \left( \max \left\{ \dfrac{w_{n}}{w_{n+1}},\dfrac{w_{n+1}}{w_{n}}|v_{n}|\right\}+\gamma\right)+\\\sqrt{   \left( \max \left\{ \dfrac{w_{n}}{w_{n+1}},\dfrac{w_{n+1}}{w_{n}}|v_{n}|\right\}-\gamma\right)^{2}    +4\dfrac{w_{n-1}}{w_{n}} \sqrt{\sum_{j=1}^{n-1}|v_{j}|^{2}\left(\dfrac{w_{n+1}}{w_{j}}\right)^{2}}} \biggr),
	\end{multline*}
	where $w_{i}\in\mathbb{R^{+}}~ \mbox{and}~ \gamma=\max\left\{\dfrac{w_{1}}{w_{2}} ,  \dfrac{w_{2}}{w_{3}},\dots,\dfrac{w_{n-2}}{w_{n-1}}           \right\}. $
\end{theorem}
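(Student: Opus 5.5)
Every zero $z$ of $P_r$ is a left eigenvalue of $C_{P_r}$ (the fact from \cite{SR} quoted in Section 2), so it suffices to bound $|z|$ by the spectral norm of a similar matrix. I would take the real diagonal matrix $W=\mathrm{diag}(w_1,\dots,w_{n+1})$ with $w_i>0$. By Lemma \ref{lemma 3.3}, $C_{P_r}$ and $B:=W^{-1}C_{P_r}W$ have the same left eigenvalues. If $Bx=\lambda x$ with $x\neq 0$, then
\[
|\lambda|\,\|x\|_2=\|\lambda x\|_2=\|Bx\|_2\le \|B\|_2\|x\|_2,
\]
so $|z|\le \|B\|_2$. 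The entries of $B$ are
\[
b_{i+1,i}=\frac{w_i}{w_{i+1}}\quad (1\le i\le n), \qquad b_{j,n+1}=\frac{v_j w_{n+1}}{w_j}\quad (1\le j\le n),
\]
and all other entries are zero.

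Next I partition $B$ as in Lemma \ref{lemma 3.5} with $n_1=n-1$ and $n_2=2$, so that $A_{22}$ is the trailing $2\times 2$ block:
\[
A_{22}=\begin{pmatrix} 0 & v_n w_{n+1}/w_n\\ w_n/w_{n+1} & 0\end{pmatrix}.
\]
The other blocks are as follows.
\begin{itemize}
\item $A_{11}$ is the $(n-1)\times(n-1)$ shift with subdiagonal $w_1/w_2,\dots,w_{n-2}/w_{n-1}$.
\item $A_{12}$ has only its last column nonzero, with entries $v_jw_{n+1}/w_j$ for $j\le n-1$.
\item $A_{21}$ has a single nonzero entry $w_{n-1}/w_n$, in position $(1,n-1)$.
\end{itemize}
Their spectral norms are
\[
\|A_{11}\|_2=\gamma,\qquad \|A_{22}\|_2=\max\Bigl\{\frac{w_n}{w_{n+1}},\ \frac{w_{n+1}}{w_n}|v_n|\Bigr\}=:\alpha,
\]
\[
\|A_{12}\|_2=\sqrt{\sum_{j=1}^{n-1}|v_j|^2 (w_{n+1}/w_j)^2}=:\beta,\qquad \|A_{21}\|_2=\frac{w_{n-1}}{w_n}=:\delta.
\]
These hold because a shift with weights, or a matrix whose nonzero entries lie in distinct rows and columns, has spectral norm equal to the largest modulus of its entries, and a single-column matrix has spectral norm equal to the Euclidean norm of that column. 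Quaternionic entries do not change these computations, since $\|Mx\|_2$ only involves the moduli of the entries. The condition $n\ge 4$ simply makes the block sizes nondegenerate.

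By Lemma \ref{lemma 3.5} I get $|z|\le \|B\|_2\le \|\tilde A\|_2$, where $\tilde A=\begin{pmatrix}\gamma&\beta\\ \delta&\alpha\end{pmatrix}$. The stated formula is exactly the larger eigenvalue of this nonnegative $2\times 2$ matrix:
\[
\tfrac12\Bigl(\alpha+\gamma+\sqrt{(\alpha-\gamma)^2+4\beta\delta}\Bigr),
\]
which equals $\rho(\tilde A)$, not $\|\tilde A\|_2$. So I would use the first bullet of Lemma \ref{lemma 3.5} instead and aim for $\rho_r(B)\le\rho(\tilde A)$.

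\textbf{Main obstacle.} The step needing care is the identification $\max\{|z|\}\le\rho_r(B)$, i.e.\ that every left eigenvalue of the companion matrix has modulus at most the largest modulus of a right eigenvalue. For companion matrices, Corollary 1 of \cite{SR} (quoted earlier) says a left eigenvalue is also a right eigenvalue, and right eigenvalues are preserved by the real similarity $W$. So for $z$ a left eigenvalue of $C_{P_r}$, $z$ is a right eigenvalue of $C_{P_r}$, hence of $B$. That gives $|z|\le\rho_r(B)\le \rho_r(\tilde A)=\rho(\tilde A)$, since $\tilde A$ is real and its spectral radius is the Perron root. This yields exactly the claimed bound, and I expect the argument to go through via this route.
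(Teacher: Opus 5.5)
Your argument is correct and is essentially the paper's proof. It uses the same diagonal similarity $W^{-1}C_{P_r}W$, the same $(n-1)+2$ partition with the same four block norms, the first bullet of Lemma \ref{lemma 3.5}, and the Perron root of the $2\times 2$ matrix $\tilde A$. Two small points are worth fixing.

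\textbf{The formula.} As typeset, the statement's bound is $\tfrac12(\alpha+\gamma)+\sqrt{(\alpha-\gamma)^2+4\beta\delta}$. The $\tfrac12$ does not cover the square root; the paper's Example 3 evaluates it as $4+8$. So the stated formula is not ``exactly'' $\rho(\tilde A)$. What you prove, $\rho(\tilde A)=\tfrac12\bigl(\alpha+\gamma+\sqrt{(\alpha-\gamma)^2+4\beta\delta}\bigr)$, is smaller and therefore implies the statement. It is also what the paper's intermediate display actually establishes.

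\textbf{The step $|z|\le\rho_r(B)$.} The paper simply asserts $|z|\le\rho_r(C_{P_r})$, and you were right to flag it. However, Corollary 1 of \cite{SR} is quoted in the paper only for $C_{f_l}$. There the eigenvector $x=(1,\lambda,\dots,\lambda^{n-1})^T$ satisfies $\lambda x=x\lambda$, so it serves both sides. For the right companion form $C_{P_r}$, the left eigenvector normalized by $x_{n+1}=1$ has entries $\lambda$, $\lambda^2-v_n$, $\lambda^3-\lambda v_n-v_{n-1},\dots$. These need not commute with $\lambda$, so that argument does not transfer verbatim.

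A fix using only facts quoted in the paper:
\begin{enumerate}
\item $C_{P_r}^H$ has superdiagonal ones and last row $(\bar v_1,\dots,\bar v_n,0)$. It is therefore the left companion matrix of $h(w)=w^{n+1}-\bar v_n w^{n-1}-\cdots-\bar v_2 w-\bar v_1$.
\item Conjugating gives $\overline{P_r(z)}=h(\bar z)$.
\item If $P_r(z)=0$, then $h(\bar z)=0$, so $(1,\bar z,\dots,\bar z^{n})^T$ is a right eigenvector of $C_{P_r}^H$ for $\bar z$.
\item By Corollary 3.2 of \cite{R1}, $\bar z\in\Lambda_r(C_{P_r})$.
\item Similarity preserves right eigenvalues, so $\bar z\in\Lambda_r(B)$.
\end{enumerate}
Hence $|z|=|\bar z|\le\rho_r(B)\le\rho(\tilde A)$, and the rest of your argument goes through unchanged.
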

\begin{proof}
	Let Let $W=diag\left({w_{1}},{w_{2}}\dots,{w_{n+1}}\right),$ where $w_{i} \in \mathbb{R}^+,$ $i=1,2\dots,n+1$, then
	\[
	W^{-1} C_{P_{r}} W = 
	\begin{pmatrix}
		0 & 0 & 0 & \cdots&0&0 & v_{1}\dfrac{w_{n+1}}{w_{1}} \\
		\dfrac{w_{1}}{w_{2}} & 0 & 0 & \cdots& 0&0 & v_{2}\dfrac{w_{n+1}}{w_{2}} \\
		\vdots & \vdots & \vdots &  & \vdots& \vdots&\vdots \\
		0 & 0 & 0 & \cdots &\dfrac{w_{n-1}}{w_{n}}&0& v_{n}\dfrac{w_{n+1}}{w_{n}} \\
		0  & 0& 0 & \cdots &0& \dfrac{w_{n}}{w_{n+1}}&0
	\end{pmatrix}
.	\]
	Let \[C_{B}=\begin{bmatrix}
		C_{11} & C_{12} \\
		C_{21} & C_{22}
	\end{bmatrix}\] be block matrix of $W^{-1}C_{P_{r}}W,$
	where \[
	C_{11} := 
	\begin{bmatrix}
		0 &0 & 0 & \cdots & 0&0\\
		\dfrac{w_{1}}{w_{2}} & 0 &0& \cdots &0&0   \\
		\vdots & \vdots & \vdots & \ddots & \vdots&\vdots \\
		0 & 0 & 0 & \cdots & \dfrac{w_{n-2}}{w_{n-1}}&0 \\
	\end{bmatrix},
	\quad C_{12} := 
	\begin{bmatrix}
		0 & v_{1}\dfrac{w_{n+1}}{w_{1}} \\
		0 & v_{2}\dfrac{w_{n+1}}{w_{2}} \\
		\vdots&\vdots \\ 
		0& v_{n-1}\dfrac{w_{n+1}}{w_{n-1}} \\
	\end{bmatrix}. \]          
	\[C_{21} := \begin{bmatrix}
		0 &0&\cdots& 0 &\dfrac{w_{n-1}}{w_{n}}\\0 &0&\cdots& 0 &0
	\end{bmatrix}\quad 
	C_{22}=\begin{bmatrix}
		0 &v_{n}\dfrac{w_{n+1}}{w_{n}}\\ \dfrac{w_{n}}{w_{n+1}}&0
	\end{bmatrix}.\]
	Applying Lemma \ref{lemma 3.5}, we have
	\begin{align*}
		\rho_{r}(C_{P_{r}}) &\leq \rho_{r}\left(\left[\begin{array}{cc} \|C_{11}\|_{2} & \|C_{12}\|_{2} \\ \|C_{21}\|_{2} & \|C_{22}\|_{2} \end{array}\right]\right) \\
		&= \frac{1}{2} \left( \|C_{11}\|_{2} + \|C_{22}\|_{2} + \sqrt{(\|C_{11}\|_{2} - \|C_{22}\|)^2 + 4 \|C_{12}\|_{2} \|C_{21}\|_{2}} \right).
	\end{align*}
	By tedious computations, one can show that $\|C_{11}\|_{2}=\gamma, ~\|C_{21}\|_{2}=\dfrac{w_{n-1}}{w_{n}}, ~\|C_{12}\|_{2}=\sqrt{\sum_{j=1}^{n-1}|v_{j}|^{2}\left(\dfrac{w_{n+1}}{w_{j}}\right)^{2}} ~\mbox{and}~\|C_{22}\|_{2}=\max\left\{\dfrac{w_{n}}{w_{n+1}},|v_{n}|\dfrac{w_{n+1}}{w_{n}}\right\},$ \\where $\gamma=\max\left\{\dfrac{w_{1}}{w_{2}} ,  \dfrac{w_{2}}{w_{3}},\dots,\dfrac{w_{n-2}}{w_{n-1}}           \right\}.$
	It follows that\\
	\begin{multline*}
		\rho_{r}(C_{P_{r}})\le \biggl(\dfrac{1}{2}                                  \left( \max \left\{ \dfrac{w_{n}}{w_{n+1}},\dfrac{w_{n+1}}{w_{n}}|v_{n}|\right\}+\gamma\right)+\\\sqrt{   \left( \max \left\{ \dfrac{w_{n}}{w_{n+1}},\dfrac{w_{n+1}}{w_{n}}|v_{n}|\right\}-\gamma\right)^{2}    +4\dfrac{w_{n-1}}{w_{n}} \sqrt{\sum_{j=1}^{n-1}|v_{j}|^{2}\left(\dfrac{w_{n+1}}{w_{j}}\right)^{2}}} \biggr).
	\end{multline*}
	Now the desired bound follows from the fact that $|z|\le \rho_{r}(C_{P_{r}}).$
\end{proof}
\section{Computational Examples}
The inherent advantage of these bounds over classic Cauchy, Opfer and Fujiwara bounds lies in their tunable parameters (like the weights $w_i$) and how they handle polynomials with isolated large coefficients by utilizing fractional powers or $L_2$ norms. 

Here are three examples of quaternionic polynomials where these theorems yield strictly sharper bounds than the standard Cauchy, Fujiwara and Opfer's formulas.

\subsection*{Example 1: Dominating the Upper Bound with Theorem \ref{th:01}}

Theorem \ref{th:01} excels when the higher-degree coefficients are zero (or very small) and the constant term is large, because the $1/i$ fractional powers shrink large constants much faster than the Cauchy max-norm.\\
Let $n=3$ and consider the monic left quaternionic polynomial:
\[ f_{l}(z) = z^{3} + 0z^{2} + jz + 8k. \]
Here, the coefficients are $q_2 = 0$, $q_1 = j$, and $q_0 = 8k$.
\begin{itemize}
	\item \textbf{Cauchy's Bound:} $C = 1 + \max(|q_2|, |q_1|, |q_0|) = 1 + \max(0, 1, 8) = 9$
	\item \textbf{Opfer's Bound:} $O=\max(1, |q_2|, |q_1|, |q_0|)=\max(1,0,1,8)=8$
	\item \textbf{Fujiwara's Bound:} $F = 2 \max\left(|q_2|, |q_1|^{\frac{1}{2}}, \left|\frac{q_0}{2}\right|^{\frac{1}{3}}\right) = 2 \max\left(0, 1, 4^{\frac{1}{3}}\right) \approx 2(1.587) = 3.174$
\end{itemize}
\vspace{0.3cm}
\noindent \textbf{Theorem \ref{th:01} Bound:}\\
According to Theorem \ref{th:01}, all zeros satisfy:
\[ \left|z+\frac{q_{2}}{2}\right| \le \left|\frac{q_{2}}{2}\right| + \sum\limits_{i=2}^{3}|q_{3-i}|^{\frac{1}{i}}. \]
Since $q_2 = 0$, this simplifies to a direct bound on $|z|$:
\[ |z| \le 0 + |q_{1}|^{\frac{1}{2}} + |q_{0}|^{\frac{1}{3}} =|j|^{\frac{1}{2}} + |8k|^{\frac{1}{3}} = 1 + 2 = 3. \]
Thus, Theorem \ref{th:01} yields a maximum radius of \textbf{$3$}, which is strictly sharper than Fujiwara ($3.174$) and vastly superior to Cauchy ($9$) and Opfer ($8$).
\subsection*{Example 2: Securing a Tighter Lower Bound with Theorem \ref{th:02}}

Standard lower bounds often fail spectacularly when a single higher-degree coefficient is large, driving the denominator up and the lower bound down to near-zero. Theorem \ref{th:02} mitigates this by applying a weight parameter $w$ to suppress higher-degree terms.\\
Let $n=3$ and consider the polynomial:
\[ f_{l}(z) = z^{3} + 100jz^{2} + 0z + 1. \]
Here, $q_3 = 1$, $q_2 = 100j$, $q_1 = 0$, and $q_0 = 1$.
\begin{itemize}
	\item \textbf{Cauchy Lower Bound:} $L_{C} = \dfrac{|q_0|}{|q_0| + \max_{1 \le i \le 3}|q_i|} = \dfrac{1}{1 + 100} = \frac{1}{101} \approx 0.0099.$
\end{itemize}

\vspace{0.3cm}
\noindent \textbf{Theorem \ref{th:02} Bound:}\\
Let's choose a positive weight $w = 0.1$. 
First, we calculate $M = \max_{i=1,2,3} |q_i|w^i$:
\begin{itemize}
	\item $|q_1|w^1 = 0(0.1) = 0$
	\item $|q_2|w^2 = 100(0.01) = 1$
	\item $|q_3|w^3 = 1(0.001) = 0.001.$
\end{itemize}
Thus, $M = \max(0, 1, 0.001) = 1$.\\
Now, applying the theorem:
\[ |z| \ge \frac{|q_{0}|w}{|q_{0}| + M} = \frac{1(0.1)}{1 + 1} = \frac{0.1}{2} = 0.05. \]
By strategically choosing $w=0.1$, Theorem \ref{th:02} yields a lower bound of \textbf{$0.05$}, which is roughly five times sharper (larger) than the classical Cauchy lower bound ($0.0099$).

\subsection*{Example 3: Beating Fujiwara with Tuned Weights in Theorem \ref{th:03}}

Theorem \ref{th:03} is highly flexible because it resembles a weighted Frobenius companion matrix bound. It dominates when we select weights that violently suppress the single non-zero coefficient while keeping $\gamma$ and the $w_n/w_{n+1}$ ratios balanced.\\
Let $n=4$ and consider a right quaternionic polynomial with a massive $z^2$ coefficient:
\[ P_{r}(z) = z^{4} + z^{2}64j. \]
Here, $v_4 = 0$, $v_3 = 64j$, $v_2 = 0$, and $v_1 = 0$.
\begin{itemize}
	\item \textbf{Cauchy's Bound:} $C = 1 + \max(0, 64, 0, 0) = 65$
	\item \textbf{Opfer's Bound:} $O=\max(1,0, 64, 0, 0) = 64$
	\item \textbf{Fujiwara's Bound:} $F = 2 \max(0, |64j|^{\frac{1}{2}}, 0, 0) = 2(8) = 16$
\end{itemize}

\noindent \textbf{Theorem \ref{th:03} Bound:}\\
Let's choose specific scaling weights to optimize the bound: $w_1 = 256, w_2 = 64, w_3 = 16, w_4 = 4, w_5 = 1$. 

\noindent First, we calculate the core parameters:
\begin{itemize}
	\item $\gamma = \max\left(\dfrac{256}{64}, \dfrac{64}{16}, \dfrac{16}{4}\right) = \max(4, 4, 4) = 4,$ where $\gamma=\max\left\{\dfrac{w_{1}}{w_{2}} ,  \dfrac{w_{2}}{w_{3}},\dots,\dfrac{w_{n-2}}{w_{n-1}}           \right\}.$
	\item $M = \max\left\{ \dfrac{w_4}{w_5}, \dfrac{w_5}{w_4}|v_4| \right\} = \max\left( 4, \dfrac{1}{4}(0) \right) = 4.$
	\item Ratio multiplier: $\dfrac{w_3}{w_4} = \dfrac{16}{4} = 4.$
\end{itemize}

\noindent Next, we calculate the sum inside the nested square root. Since only $v_3$ is non-zero, the sum collapses to a single term:
\[ \sqrt{\sum_{j=1}^{3}|v_{j}|^{2}\left(\frac{w_{5}}{w_{j}}\right)^{2}} = \sqrt{|v_3|^2 \left(\frac{w_5}{w_3}\right)^2} = |64| \left(\frac{1}{16}\right) = 4. \]

\noindent Now, plug everything into the full Theorem \ref{th:03} formula:
\[ |z| \le \frac{1}{2}(4 + 4) + \sqrt{(4 - 4)^{2} + 4(4)(4)} \]
\[ |z| \le 4 + \sqrt{0 + 64} \]
\[ |z| \le 4 + 8 = 12. \]
With carefully chosen geometric weights, Theorem \ref{th:03} restricts the roots to a disk of radius \textbf{$12$}. This completely bypasses all Fujiwara ($16$), Cauchy ($65$) and Opfer($64$) proving the immense utility of the tunable $w_i$ parameters in your bound.
\section*{An Algorithm}
While Theorems \ref{th:01}, \ref{th:02}, and \ref{th:03} provide mathematically rigorous bounds, evaluating all possible bounds for high-degree polynomials can be computationally redundant. To optimize this process, we propose a heuristic algorithm (Algorithm 1) that analyzes the coefficient profile of the given quaternionic polynomial. By identifying the magnitude and position of the dominant coefficient, the algorithm selectively applies the most effective theorem. For instance, polynomials with dominant constant terms are routed to Theorem \ref{th:01}, whereas polynomials with dominant intermediate terms trigger the weighted optimization of Theorem \ref{th:03}.
\begin{algorithm}
	\caption{Heuristic Bound Optimizer for Quaternionic Polynomials}
	\begin{algorithmic}[1] 
		\Require Coefficient magnitudes $Q = \{|q_0|, |q_1|, \dots, |q_{n-1}|\}$ of a monic quaternionic polynomial $f(z)$ of degree $n$, empirical threshold $\tau$ (e.g., $\tau = 1.5$)
		\Ensure Sharpest upper bound $U$ and lower bound $L$
		
		\State $U \gets \infty$, $L \gets 0$
		\State Find $q_{max} = \max(Q)$ and let $k$ be its index such that $q_{max} = |q_k|$
		
		\Statex \textbf{Determine Upper Bound ($U$):}
		\If{$q_{max} \le \tau$} \Comment{Profile: Flat and Small}
		\State Compute $C_{up}$ (Cauchy Bound) and $O_{up}$ (Opfer Bound)
		\State $U \gets \min(C_{up}, O_{up})$
		\ElsIf{$k = 0$} \Comment{Profile: Heavy Tail}
		\State Compute $T1_{up}$ (Theorem \ref{th:01})
		\State $U \gets T1_{up}$
		\ElsIf{$0 < k < n-1$} \Comment{Profile: Middle Bulge}
		\State Optimize geometric ratio $r \in \mathbb{R}^+$ to minimize Theorem \ref{th:03}:
		\State $U \gets \min_{r} T3_{up}(r)$
		\Else \Comment{Profile: Top Heavy / Contested}
		\State Compute all available upper bounds and set $U$ to the minimum
		\EndIf
		
		\Statex \textbf{Determine Lower Bound ($L$):}
		\State Compute standard Cauchy lower bound $C_{low}$
		\State Optimize weight $w \in \mathbb{R}^+$ to maximize Theorem \ref{th:02}:
		\State $L \gets \max(C_{low}, \max_{w} T2_{low}(w))$
		
		\State \Return $U, L$
	\end{algorithmic}
\end{algorithm}

In steps requiring the optimization of continuous parameters (such as the geometric ratio $r$ in Theorem \ref{th:03} and the weight $w$ in Theorem \ref{th:02}), the objective functions are continuous and bounded. Therefore, these minimums and maximums can be efficiently located using standard scalar optimization techniques, such as Brent's method, ensuring the algorithm resolves in bounded time.\\

The heuristic optimization described in Algorithm 1 was implemented in Python. The complete, runnable source code utilized to calculate the bounds in our examples is provided in Appendix A.
\section{Conclusion} 
In this paper, new bounds on the zeros of quaternionic polynomials have been derived, which generalize and improve some classical results. In fact, the new upper bound localizes the zeros in a displaced disk with center at $-\dfrac{q_{n-1}}{2}$, thus achieving a better estimate with respect to the classical Cauchy bound, in which all the zeros are contained in a disk centered at the origin.

Moreover, the new bounds generalize the classical Fujiwara bounds by taking into account the contributions of several coefficients, not just the dominant ones, thus achieving a potentially better estimate, especially when the coefficients are small.

The second result establishes a non-trivial lower bound on the modulus of the zeros, thus localizing the zeros in an annulus, a region not addressed by the classical bounds.

The third theorem introduces a new weighted approach, which is a further generalization of the previous bounds, thus allowing us to localize the zeros with even greater precision by choosing appropriate weights, which may lead to sharper estimates with respect to the classical bounds.

Overall, these results contribute to the theory of zero localization in the quaternionic setting by offering bounds that are not only generalizations of classical inequalities but also, in many cases, improvements over them. Future research may focus on optimizing parameter selection, comparing sharpness numerically, and extending these techniques to quaternionic matrix and operator polynomials.
\section{Declaration}
\textbf{Availabilty of data and material}\\
 Data availability is not applicable to this article as no new data were created or analyzed in this study.\\

\noindent \textbf{Competing interests}\\
The author declare that they have no competing interests.\\

\noindent \textbf{Funding}\\
The research of first author is supported by DST Inspire Fellowship (IF No. IF210629).

\section*{Appendix A. Python Implementation of the Heuristic Optimizer}
The following Python script provides a complete, executable implementation of the heuristic bound optimizer detailed in Algorithm 1. The script evaluates the classical bounds (Cauchy, Fujiwara, Opfer) alongside the novel bounds presented in Theorems \ref{th:01}, Theorem \ref{th:02} and Theorem \ref{th:03}. The continuous optimization required for Theorems \ref{th:02} and \ref{th:03} is handled using the minimize scalar function from the open-source scipy.optimize library. The script is written in Python 3 and is designed to accept the coefficient magnitudes of a quaternionic polynomial via standard console input, subsequently outputting the sharpest computed upper and lower bounds.
\begin{lstlisting}
	import math
	from scipy.optimize import minimize_scalar
	
	def analyze_polynomial_profile(coeffs):
	"""
	Analyzes the coefficients to predict which theorem will yield the sharpest upper bound.
	"""
	n = len(coeffs)
	max_coeff = max(coeffs)
	max_index = coeffs.index(max_coeff)
	
	print("\n--- Heuristic Analysis ---")
	print(f"Degree of polynomial: {n}")
	print(f"Maximum coefficient magnitude: {max_coeff} (found at q_{max_index})")
	
	if max_coeff <= 1.5:
	print("Profile: 'Flat & Small'. All coefficients are relatively small.")
	print("Prediction: Cauchy or Opfer will likely provide a very tight, fast bound.")
	return "Cauchy/Opfer"
	
	elif max_index == 0:
	print("Profile: 'Heavy Tail'. The constant term (q_0) is the dominant massive coefficient.")
	print("Prediction: Theorem 1 will likely win because its fractional roots suppress large constants.")
	return "Theorem 4.1"
	
	elif 0 < max_index < n - 1:
	print(f"Profile: 'Middle Bulge'. A middle term (q_{max_index}) is the massive outlier.")
	print("Prediction: Theorem 4.3 will likely win because its tunable weights can target this spike.")
	return "Theorem 4.3"
	
	else:
	print("Profile: 'Top Heavy'. The n-1 degree coefficient is dominant.")
	print("Prediction: The bounds will be highly contested. Theorem 1 or 3 are strong candidates.")
	return "Mixed"
	
	def calculate_all_upper_bounds(coeffs):
	"""
	Calculates Cauchy, Fujiwara, Opfer, Theorem 1, and Theorem 3 upper bounds.
	"""
	n = len(coeffs)
	q = coeffs + [1.0] # Append monic leading coefficient
	
	# 1. Cauchy
	cauchy = 1 + max(coeffs)
	
	# 2. Opfer
	opfer = max(1, sum(coeffs))
	
	# 3. Fujiwara
	fujiwara_terms = [q[n-i] ** (1/i) for i in range(1, n)]
	fujiwara_terms.append((q[0] / 2) ** (1/n))
	fujiwara = 2 * max(fujiwara_terms)
	
	# 4. Theorem 4.1
	t1 = q[n-1] + sum(q[n-i] ** (1/i) for i in range(2, n+1))
	
	bounds = {
		"Cauchy": cauchy,
		"Opfer": opfer,
		"Fujiwara": fujiwara,
		"Theorem 4.1": t1
	}
	
	# 5. Theorem 4.3 (Optimized)
	if n >= 4:
	def t3_objective(r):
	if r <= 0: return float('inf')
	gamma = r
	v = {j: coeffs[j-1] for j in range(1, n+1)}
	term1_max = max(r, (1/r) * v[n])
	part1 = 0.5 * (term1_max + gamma)
	
	sum_inner = sum((v[j]**2) * (r**(j - n - 1))**2 for j in range(1, n))
	part2 = math.sqrt((term1_max - gamma)**2 + 4 * r * math.sqrt(sum_inner))
	return part1 + part2
	
	res_t3 = minimize_scalar(t3_objective, bounds=(0.01, 100), method='bounded')
	if res_t3.success:
	bounds["Theorem 4.3"] = res_t3.fun
	
	return bounds
	
	def main():
	print("==================================================")
	print(" Quaternionic Polynomial Bound Analyzer")
	print("==================================================")
	print("Enter the magnitudes (absolute values) of your coefficients.")
	print("For example, for z^3 + 0z^2 + jz + 8k, enter: 8 1 0")
	
	user_input = input("\nEnter magnitudes separated by spaces (from q_0 up to q_{n-1}): ")
	
	try:
	# Parse keyboard input into a list of floats
	coeffs = [float(x) for x in user_input.strip().split()]
	if len(coeffs) < 2:
	print("Please enter at least two coefficients.")
	return
	
	# Calculate the actual bounds
	print("\n--- Actual Computations ---")
	bounds = calculate_all_upper_bounds(coeffs)
	
	for name, value in bounds.items():
	print(f"{name+':':<12} {value:.4f}")
	
	best_bound_name = min(bounds, key=bounds.get)
	best_bound_value = bounds[best_bound_name]
	
	print("\n--------------------------------------------------")
	print(f" SHARPEST BOUND: {best_bound_name} ({best_bound_value:.4f})")
	print("--------------------------------------------------")
	
	except ValueError:
	print("Invalid input. Please enter numbers separated by spaces.")
	
	if __name__ == "__main__":
	main()
\end{lstlisting}

\end{document}